\documentclass{amsart}
\usepackage{amsbsy,amssymb,amscd,amsmath,amsthm, hyperref}
\usepackage{a4wide}
\usepackage[usenames,dvipsnames,svgnames,table]{xcolor}

\newcommand{\out}[1]{\ }

\newcommand{\RR}{{\mathbb R}}
\newcommand{\CC}{{\mathbb C}}

\renewcommand{\ge}{\geqslant}
\renewcommand{\le}{\leqslant}
\renewcommand{\phi}{\varphi}
\renewcommand{\epsilon}{\varepsilon}

\renewcommand{\atop}[2]{\genfrac{}{}{0pt}{}{#1}{#2}}

\DeclareMathOperator{\PSH}{PSH}

\makeatletter
\newtheorem*{rep@theorem}{\rep@title}
\newcommand{\newreptheorem}[2]{%
\newenvironment{rep#1}[1]{%
 \def\rep@title{#2 \ref{##1}}%
 \begin{rep@theorem}}%
 {\end{rep@theorem}}}
\makeatother

\newtheorem{theorem}{Theorem}[section]
\newreptheorem{theorem}{Theorem}

\theoremstyle{definition}
\newtheorem{definition}[theorem]{Definition}

\theoremstyle{remark}

\numberwithin{equation}{section}

\newtheoremstyle{case}
{3pt}
  {3pt}
  {}
  {}
  {\bfseries}
  {:}
  {.5em}
  {}
\theoremstyle{case}

\numberwithin{subcase}{case}

\hyphenation{pluri-sub-harmonic}
\usepackage[utf8]{inputenc}
\begin{document}

\title{An example concerning Sadullaev's boundary relative extremal functions}

\dedicatory{In memory of J\'ozef Siciak}
\author{Jan Wiegerinck}
\address{KdV Institute for Mathematics
\\University of Amsterdam
\\Science Park 105-107
\\P.O. box 94248, 1090 GE Amsterdam
\\The Netherlands}
\email{j.j.o.o.wiegerinck@uva.nl}
\subjclass[2000]{32U05,32U20}
\keywords{plurisubharmonic
function; boundary relative extremal function}
\begin{abstract}
We exhibit a smoothly bounded domain $\Omega$ with the property that for suitable $K\subset\partial \Omega$ and $z\in \Omega$ the \emph{Sadullaev boundary relative extremal functions} satisfy the inequality $\omega_1(z,K,\Omega)<\omega_2(z,K,\Omega)\le \omega(z,K,\Omega)$. 
\end{abstract}
\maketitle
\section{Introduction}
In \cite{Sa} Sadullaev introduced several so-called \emph{boundary relative extremal functions} for compact sets $K$ in the boundary of domains $D\subset\CC^n$, and asked whether their regularizations are perhaps always equal. Recently Djire and the author \cite{IbJa, IbJa1} gave a positive answer in certain cases where $D$ and $K$ are particularly nice. 

In this note we show that in general equality does not hold. The example is formed by a suitable compact set in the boundary of the domain $\Omega$ that was constructed by Forn\ae ss and the author \cite{FoWi} as an example of a domain $D$ where bounded plurisubharmonic functions that are continuous on $D$ cannot be approximated by plurisubharmonic functions that are continuous on $\overline D$. We start by briefly recalling the definitions of boundary relative extremal functions and the construction of the domain $\Omega$.

\subsection{Boundary relative extremal functions} We follow Sadullaev \cite[Section 27]{Sa}. 
Let $D$ be a domain with smooth boundary in $\CC^n$, $\xi\in\partial D$, and $A_\alpha(\xi)=\{z\in D; |z-\xi|<\alpha\delta_\xi(z)\}$, where $\alpha\ge 1$ and $\delta_\xi(z)$ is the distance from $z$ to the tangent plane at $\xi$ to $\partial D$.
For a function $u$ defined on $D$, put
\[\tilde u(\xi)=\sup_{\alpha>1} \limsup_{\atop{z\to\xi}{z\in A_\alpha(\xi)}} u(z),\quad \xi\in\partial D.\]
\begin{definition}Let $\PSH(D)$ denote the plurisubharmonic functions on $D$ and let $K\subset\partial D$ be compact. We define the following  \emph{boundary relative extremal functions}
\begin{enumerate}
\item \[\omega(z,K,D)=\sup\{u(z): u\in\PSH(D), u\le 0, \tilde u|_K\le -1\};\]
\item \[\omega_1(z,K,D)=\sup\{u(z): u\in\PSH(D)\cap C(\overline D), u\le 0, u|_K\le -1\};\]
\item \[\omega_2(z,K,D)=\sup\{u(z): u\in\PSH(D), u\le 0, \limsup_{\atop{z\to\xi}{z\in D}}\le -1,\text{ for all } \xi\in K\}.\]
\end{enumerate}
\end{definition}
The upper semi-continuous regularization $u^*$ of a function $u$ on a domain $D$ is defined as
\[u^*(z)=\limsup_{w\to z} \{u(w)\}.\]
The functions $\omega^*$, $\omega_1^*$, $\omega_2^*$ are plurisubharmonic.
Observing that $\omega_1(z,K,D)\le\omega_2(z,K,D)\le\omega(z,K,D)$, Sadullaev's question is \emph{for what $j$ is $\omega^*(z,K,D)\equiv\omega^*_j(z,K,D)$?}

\subsection{The domain $\Omega$}
We briefly recall the construction and properties of the domain $\Omega$ from \cite{FoWi}.
\begin{equation}
\Omega=\{(z,w)\in\CC^2; |w-e^{i\phi(|z|)}|^2<r(|z|)\}.
\end{equation}
Here $r$ and $\phi$ are in $\CC^\infty(\RR)$ with the following properties: $-1\le r\le 2$; $r(t)\le 0$ for $t\le 1$ and for $t\ge 17$; $r(t)\equiv 1$ for $3\le t\le 8$ and for $10\le t\le 15$; $r(t)$ takes its maximum value $=2$ precisely at $t=2$, 9, and 16. Moreover, $r'(t)>0$ on $1\le t<2$, $8<t<9$ and $15<t<16$, while $f'(t)<0$ on  $2<t<3$, $9<t<10$, and $16<t\le 17$. Next $\phi$ satisfies 
$\phi(t)<-\pi/2$ for $t\le 4$ and for $t\ge 14$; $\phi(t)> \pi/2+100$ for $5\le t\le6 $ and for $12\le t\le 13$ and $\phi(t)<-\pi/2+100$ for $7<t<10$, and we demand in addition that $\phi\le 108$. 

From \cite{FoWi} we recall that $\Omega$ is a Hartogs domain with smooth boundary, and that the annulus 
\begin{equation}
A=\{(z,w);w=0, 2\le |z|\le 15 \}
\end{equation}
is contained in $\overline\Omega$.
\section{Negative answer to Sadullaev's question}
\begin{theorem}Let $K=\{(z,w\in\partial \Omega; |z|=2 \text{ or } |z|=16\}$. Then 
\[\omega_1((z,w),K,\Omega)<\omega_2((z,w),K,\Omega)\]
for $(z,w)$ in an open neighborhood of $\{w=0, |z|=9\}$.
\end{theorem}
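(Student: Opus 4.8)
The plan is to sandwich the two functions from opposite sides near $\{w=0,|z|=9\}$: to force a strictly negative \emph{upper} bound for $\omega_1$ out of the continuity requirement, and to produce a strictly larger \emph{lower} bound for $\omega_2$ from a single explicit competitor. The mechanism is the Forn\ae ss--Wiegerinck dichotomy: continuity on $\overline\Omega$ makes available an analytic disk (a Jensen measure) whose interior grazes $\partial\Omega$, and which is therefore \emph{not} at the disposal of the merely plurisubharmonic competitors defining $\omega_2$. As a reduction valid for both classes, note that over each circle $|z|=2$ and $|z|=16$ the closed fibre of $\overline\Omega$ is a disk whose bounding circle is exactly $K$; since any admissible $u$ is subharmonic on the open fibre with boundary values (resp. boundary $\limsup$) at most $-1$, the maximum principle gives $u\le-1$ on the whole closed fibre. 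Next, because $r\equiv1$ on $[3,8]\cup[10,15]$ while $r>1$ exactly on $(8,10)$, the middle component of $\{w=0\}\cap\Omega$ is the annulus $C_9=\{w=0,\ 8<|z|<10\}$, on which $u(\cdot,0)$ is subharmonic; by the maximum principle its value near $|z|=9$ is controlled by the boundary values at $|z|=8$ and $|z|=10$, which lie on the part of $\partial\Omega$ where $w=0$.

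For the upper bound on $\omega_1$ it thus suffices to bound $u(z,0)$ at $|z|=8$ and $|z|=10$ for every continuous competitor. To the boundary point $(z_1,0)$ with $|z_1|=10$ (and symmetrically $|z_1|=8$) I attach a holomorphic disk $f\colon\overline{\DD}\to\overline\Omega$, continuous up to $\partial\DD$, with $f(0)=(z_1,0)$ and $f(\partial\DD)$ contained in the closed fibre over $|z|=16$; concretely $z(\lambda)=16\,B(\lambda)$ for a Blaschke product $B$ with $|B(0)|=|z_1|/16$, together with a holomorphic $w(\lambda)$, $w(0)=0$, tracking the fibre centre $e^{i\phi(|z(\lambda)|)}$. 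The tracking is possible precisely because the prescribed winding of $\phi$ on $[13,14]$ (the constant $\sim 100\approx 16\cdot 2\pi$) makes the winding number of the centres an integer, matched by a factor $\lambda^{N}$; this is the content of the construction of \cite{FoWi}. The disk necessarily grazes $\partial\Omega$ along the $w=0$ annulus, so $f(\DD)\not\subset\Omega$; but for $u\in\PSH(\Omega)\cap C(\overline\Omega)$ the composition $u\circ f$ is continuous on $\overline\DD$ and subharmonic off the thin set $f^{-1}(\partial\Omega)$, hence subharmonic on $\DD$, and the sub-mean value inequality together with $u\le-1$ on the fibre over $|z|=16$ gives $u(z_1,0)\le-1$. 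With $u(z,0)\le-1$ at $|z|=8,10$, the maximum principle on $C_9$ yields $\omega_1((z,w),K,\Omega)\le-1$ on $\overline{C_9}$, in particular on $\{w=0,|z|=9\}$.

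For the lower bound on $\omega_2$ I would exhibit one competitor $u_2\in\PSH(\Omega)$ with $u_2\le0$ and $\limsup_{z\to\xi}u_2\le-1$ for $\xi\in K$, yet with $u_2(z,0)>-1$ (indeed close to $0$) for $|z|=9$. The disks of the previous step impose nothing here: their centres $(z_1,0)$ lie on $\partial\Omega$, outside the domain of definition of $u_2$, and since $u_2$ may tend to $-\infty$ along the $w=0$ annulus, its boundary $\limsup$ at $|z|=8,10$ \emph{within} $C_9$ is uncontrolled, so the maximum principle on $C_9$ only gives $u_2\le0$. A suitable $u_2$ is obtained as a regularized maximum of the constant $0$ with a plurisubharmonic function localized near the two end fibres over $|z|=2,16$ (equivalently, the function of \cite{FoWi}, whose very non-approximability by functions continuous on $\overline\Omega$ expresses the failure of the above Jensen measure to be Jensen for $\PSH(\Omega)$); this keeps $u_2$ near $0$ on $C_9$. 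Hence $\omega_2((z,w),K,\Omega)\ge u_2(z,w)>-1$ near $\{w=0,|z|=9\}$.

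Combining the two estimates gives $\omega_1((z,w),K,\Omega)\le-1<\omega_2((z,w),K,\Omega)$ at every point of $\{w=0,|z|=9\}$, and the open neighbourhood statement follows from upper semicontinuity of $\omega_1$ and the openness of strict inequality. The crux, and the only place where the detailed shape of $\Omega$ enters, is the disk of the second paragraph: producing a holomorphic map into $\overline\Omega$ whose boundary lands over $|z|=16$ while its interior is pinned to the $w=0$ annulus by the winding of $\phi$. Verifying that the winding data of \cite{FoWi} indeed yields such a disk through the annulus points $|z|=8,10$ is the main technical obstacle.
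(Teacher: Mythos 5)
Your plan fails at the step you yourself flag as the crux: the analytic disk does not exist, and no winding data can produce it. Suppose $F=(z,w)\colon\overline\DD\to\overline\Omega$ were holomorphic on $\DD$, continuous on $\overline\DD$, with $F(0)=(z_1,0)$, $|z_1|=10$, and $F(\partial\DD)$ contained in the fibres over $|z|=16$. Look at the first component alone: since $r(t)\le 0$ for $t\le1$, the set $\overline\Omega$ contains no points with $|z|\le1$, so $z(\lambda)\neq0$ on $\overline\DD$; then $1/z$ is holomorphic with $|1/z|=1/16$ on $\partial\DD$, so the maximum principle gives $|z|\ge16$ on $\DD$, while the maximum principle applied to $z$ gives $|z|\le16$; hence $|z|\equiv16$ and $z$ is constant, contradicting $|z(0)|=10$. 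In particular $z=16B$ with a nonconstant Blaschke product is impossible outright, since $B$ has a zero in $\DD$ while $\overline\Omega$ omits $\{z=0\}$. Letting the boundary lie over both $|z|=2$ and $|z|=16$ evades this computation, but then the $w$-component meets the argument obstruction over the annuli $3<|z|<8$ and $10<|z|<15$, where the fibres omit $w=0$ and pin $\arg w$ to within $\pi/2$ of $\phi(|z|)$; and your premise that $\phi$ winds by an exact multiple of $2\pi$ ``matched by $\lambda^N$'' misreads \cite{FoWi}: the conditions on $\phi$ are inequalities, and what \cite{FoWi} constructs is a plurisubharmonic function from branches of $\arg w$, not analytic disks. (Your removability step is also unjustified: $F^{-1}(\partial\Omega)$ would typically be a curve, which is not polar, and continuity plus subharmonicity off a curve does not imply subharmonicity --- consider $-|\re\lambda|$.) The paper's actual mechanism is different and sidesteps disks in $\overline\Omega$ entirely: because $\partial\Omega$ is smooth, \cite[Theorem 1]{FoWi} approximates each competitor $u\in\PSH(\Omega)\cap C(\overline\Omega)$ uniformly on $\overline\Omega$ by $v\in\PSH(\Omega_\delta)$; the \emph{neighborhood} $\Omega_\delta$ --- not $\overline\Omega$ --- contains the flat analytic annuli $A_w=\{(z,w):2\le|z|\le16\}$ for $|w|<\delta$, and the maximum principle for $v$ on $A_w$ transports the bound $-1+\epsilon$ from the fibres over $K$, giving $u<-1+2\epsilon$ on the open set $V=\{8<|z|<10,\ |w|<\delta,\ |w|<r(|z|)-1\}$. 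So your guiding intuition (continuity on $\overline\Omega$ unlocks analytic objects invisible to the $\omega_2$-competitors) is correct, but those objects live in a neighborhood of $\overline\Omega$, and the approximation theorem, not a disk inside $\overline\Omega$, is what grants access to them.

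There are two further gaps. First, even granting $u\le-1$ at the grazing circles and hence, by the maximum principle, $\omega_1\le-1$ on $C_9=\{w=0,\ 8<|z|<10\}$ (a bound which is in fact true --- it follows from the paper's estimate by letting $\epsilon\to0$), your passage to an open neighbourhood ``by upper semicontinuity of $\omega_1$'' is backwards: $\omega_1$ is a supremum of continuous functions, hence lower semicontinuous, and a pointwise bound on the complex annulus $\{w=0\}$, a pluripolar set, does not propagate to any neighbourhood; for families of the type $u_N=\max\{-1,\,N^{-1}\log(|w|/3)\}$ each member equals $-1$ on $\{w=0\}$ while the supremum is near $0$ off it. This is precisely why the paper works to obtain its estimate on the open set $V$ directly. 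Second, your $\omega_2$-competitor is only gestured at (``a regularized maximum localized near the end fibres''), whereas the paper's is completely explicit and its existence is the whole point of the example: $g=(f-110)/110$, with $f$ glued from $0$, $\max\{0,h\}$, $\max\{100,h\}$ and $100$ via the branch $h$ of $\arg w$, the inequalities on $\phi$ guaranteeing that the pieces match plurisubharmonically; then $g\equiv-1$ on $\Omega\cap(\{|z|\le3\}\cup\{|z|\ge14\})$, whence $\limsup_{\zeta\to\xi}g(\zeta)\le-1$ for $\xi\in K$, and $g=(100-110)/110=-1/11$ on $V$, yielding $\omega_1\le-1+2\epsilon<-1/11\le\omega_2$ on $V$ for small $\epsilon$. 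Without verifying such a gluing you have not produced any admissible competitor for $\omega_2$ that stays away from $-1$ in the middle annulus.
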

\begin{proof}
Let $u\in\PSH(\Omega)\cap C(\overline\Omega)$, $u\le 0$, $u|_K\le -1$. Then by the maximum principle, $|u|\le -1$ on the discs $|w-e^{i\phi(|z|)}|\le 2$, where $z$ is fixed and satisfies $|z|=2$ or  $|z|=16$, and in particular on the circles $C_1(w)=\{(z, w): |z|=2\}$ and $C_2(w)=\{(z,w): |z|=16\}$, where $|w|<1$.  Because $\Omega$ is a smoothly bounded domain, it follows from  \cite[Theorem 1]{FoWi} (see also \cite{PeWi} for recent extensions of this theorem), that $u$ can be approximated uniformly on $\overline\Omega$ by smooth plurisubharmonic functions $v$ defined on shrinking neighborhoods of $\overline\Omega$. 

Let $\Omega_\delta=\{\zeta\in\CC^2; d(\zeta,\overline \Omega)<\delta\}$.  Then given $\epsilon>0$, there exist $\delta>0$ and $v\in \PSH(\Omega_\delta)$, such that $|u-v|<\epsilon$ on $\overline\Omega$. For $|w|<\delta$ the annulus $A_w=\{(z,w): 2\le |z|\le 16\}$ is contained in $\Omega_\delta$. On its boundary, which equals $C_1(w)\cup C_2(w)$,  we have that $v<-1+\epsilon$, hence this also holds on $A_w$. It follows that $u<-1 +2\epsilon$ on $A_w\cap\overline\Omega$, in particular $u<-1+2\epsilon$ on the open set
$V=\{(z,w): 8<|z|<10, |w|<\delta, |w|<r(|z|)-1 \}\subset\Omega$. 
It follows that $\omega_1((z,w),K,\Omega)\le-1+2\epsilon$ on $V$, and therefore also $\omega_1^*((z,w),K,\Omega)\le-1+2\epsilon$ on $V$.

Next we will construct a plurisubharmonic function in the family that determines $\omega_2$. The construction is as in \cite[Section 2]{FoWi}.  On $\Omega \cap(\{3<|z|<8\}\cup\{10<|z|<15\}$ there exists a continuous branch of $\arg w$, denoted by $h(z,w)$, such that
\[\phi(z)-\pi/2\le h(z,w)\le \phi(z)+\pi/2.\]
In \cite{FoWi} we constructed the following plurisubharmonic function.
\begin{equation}
f(z,w)=\begin{cases} 0&\text{if } |z|<4 \text{ or if } |z|>14\\
\max\{0, h(z,w)\}&\text{if } 3<|z|<6 \text{ or if } 12<|z|<14\\
\max\{100,h(z,w)\}&\text{if } 5<|z|<8 \text{ or if } 10<|z|<13\\
100 &\text{if } 7<|z|<11.
\end{cases}
\end{equation}
It satisfies $f\le 110$ on $\Omega$, $f\equiv0$ on $\{|z|\le 3\}$ and on $\{|z|\ge 14\}$, hence $f$ extends continuously by 0 to $\overline\Omega\cap (\{|z|\le 3\}\cup \{|z|\ge 14\})$, and $f=100$ on $V$. 
The plurisubharmonic function $g$ on $\Omega$ defined by\[g(\zeta)= \frac{f(\zeta)-110}{110},\quad (\zeta=(z,w))\]
is negative, identically equal to $-1$ on $\overline\Omega\cap (\{|z|\le 3\}\cup \{|z|\ge 14\})$, and equal to  $-10/11$ on $V$. Hence also $\omega_2^*((z,w),K,\Omega)\ge\omega_2((z,w),K,\Omega)\ge -10/11$ on $V$.
Choosing $\epsilon< 1/10$ completes the proof.
\end{proof}

\end{document}